\definecolor{darkgreen}{rgb}{0.0,0.4,0.0}
\definecolor{darkred}{rgb}{0.6,0.0,0.0}
\definecolor{darkblue}{rgb}{0.0,0.0,0.5}
\definecolor{gray}{rgb}{0.5,0.5,0.5}
\definecolor{cyan}{rgb}{0.0,1.0,1.0}
\definecolor{darkcyan}{rgb}{0.0,0.5,0.5}
\definecolor{darkorange}{rgb}{0.8,0.4,0.0}
\definecolor{darkmargenta}{rgb}{0.5,0.0,0.5}
\definecolor{black}{rgb}{0.0,0.0,0.0}
\def\red#1{{\color{red}#1}}
\def \a  {\alpha}
\def \e  {\varepsilon}
\def \om {\omega}
\def \Om {\Omega}
\def \calA {\mathcal{A}}
\def \del {\nabla}
\def \div {\nabla\cdot}
\def \p  {\partial}
\def \R  {\mathbb{R}}
\def \N  {\mathbb{N}}
\def \bmu {\bm{u}}
\def \bmv {\bm{v}}
\def \bmF {\bm{f}}
\def \bms{\bm{\sigma}}
\def \tdiv {\normalfont\mbox{div}}
\def \tDiv {\normalfont\mbox{Div}}
\def \tgrad {\normalfont\mbox{grad}}
\def \tGrad {\normalfont\mbox{Grad}}
\def \i {\normalfont\red{\mbox{i}}}
\newtheorem{theorem}{Theorem}[section]
\newtheorem{lemma}[theorem]{Lemma}
\newtheorem{definition}[theorem]{Definition}
\newtheorem{problem}{Problem}
\newtheorem{remark}{Remark}
\title{The Biot--Allard poro-elasticity system: equivalent forms and well-posedness}
        \author[1]{Jakob S. Stokke}
		\author[2]{Markus Bause}
        \author[2]{Nils Margenberg}
		\author[1]{Florin A. Radu}
\affil[1]{Center for Modeling of Coupled Subsurface Dynamics, Department of Mathematics, University of Bergen, Bergen, Norway}
\affil[2]{Helmut Schmidt University, Faculty of Mechanical and Civil Engineering, Hamburg, Germany}
\date{}
\begin{document}
	
		
		
		
	\maketitle


		
		
		

		\begin{abstract}
		We consider the fully dynamic Biot--Allard model, which includes memory effects. Convolution integrals in time model the history of the porous medium. We use a series representation of the dynamic permeability in the frequency domain to rewrite the equations in a coupled system without convolution integrals, suitable for the design of efficient numerical approximation schemes. The main result is the well-posedness of the system, proved by the abstract theory of R. Picard for evolutionary problems.
		\end{abstract}
		
		
		
		
		
		
		
	
	
	\section{Introduction}
	\label{sec:intro}
	
	In this letter, we consider poro-elasticty with memory effects, referred to as the Biot--Allard model, which models the coupling of flow and deformation in a fully saturated porous medium. Many relevant problems in natural sciences and engineering are described by poro-elasticity systems, including applications in geophysics, bio-mechanics, and bio-medicine. For example, seismic monitoring of the fluid flow during $CO_2$ injection in the subsurface is important for safe storage. 
 In this case, the memory effects in the Biot--Allard model could be interpreted as a delay effect due to drag forces. Experimental results in \cite{batzle2001} indicate that memory effects due to drag forces are relevant for rocks in seismic wave propagation.
    In bone remodeling, the memory effects due to fluid flow along cell bodies play a crucial role in activating bone cells \cite{robling2006}.

	In this work, we consider the fully dynamic Biot--Allard model from \cite{MikelicBiotAllard}: Find the solid phase displacement $(\bmu)$ and the pressure $(p)$ such that there holds	
	\begin{subequations}\label{eqs:BiotAllardconv}
		\begin{align}
			\rho\p_{t}^2\bmu-\div\bm{C}\e(\bmu)+\a\del p+\p_t\left(\calA\ast (\rho_f\bm{f}-\del p-\rho_f\p_t^2\bmu)\right)&=\rho\bm{f}\label{eq:biotallardmechanincs} \quad\mbox{ in }\Om\times(0,T], \\	
			c_0\p_tp+\div (\alpha\p_t\bmu)+\div \left(\calA\ast(\bmF-\frac{1}{\rho_f}\del p-\p_t^2\bmu)\right)&=0\label{eq:biotallardflow} \quad\quad\mbox{ in }\Om\times(0,T],
		\end{align}
		 together with initial and boundary conditions
		\begin{align}\label{eq:conditions}
			\bmu(0)=\bmu_0,\quad\p_t\bmu(0)=\bmu_1, \quad p(0)&=p_0, \quad\mbox{ in }\Om,\quad
			\bmu=\bm{0}, \quad p=0,\quad\mbox{ on }\p\Om\times(0,T].
		\end{align}
		In \eqref{eqs:BiotAllardconv}, the operator $\ast$ denotes the time convolution, i.e.\ $f\ast g (t) =\int_{0}^{t}f(t-s)g(s)ds$.  Further, $\Om$ is a bounded domain in $\R^{d}$ with $d\in\{2,3\}$ with boundary $\p\Om$, $T$ is the final time, $\alpha$ is the Biot coefficient, $\rho=\rho_f\varphi+\rho_s(1-\varphi)$ is the mass density with $\rho_s$ and $\rho_f$ being the solid and fluid density respectively and $\varphi$ the porosity. Furthermore, $\textbf{C}$ is Gassman's fourth order tensor, $c_0$ is the specific storage coefficient, and $\e(\bmu) :=\frac12(\del\bmu+\del\bmu^{T})$ is the linearized strain tensor. $\calA(\cdot)$ is the dynamic permeability tensor, accounting for the delay memory effects of the interaction between the pore walls and fluid. For a general definition of the dynamic permeability we refer to \cite{johnson1987}, but we note that the low-frequency limit of $\hat{\calA}(\om)$ should equal the static case or a similar condition in the time domain \cite[Eq.\ (31)]{MikelicBiotAllard}.  The tensor $\textbf{C}$ is assumed to be symmetric and positive definite and independent of space and time variables. The right-hand side function $\bmF$ in \eqref{eq:biotallardmechanincs} is an external force density. In \eqref{eq:conditions}, homogenous Dirichlet boundary conditions are prescribed for the sake of brevity and simplicity.
	\end{subequations}
	
 By an approximation of the dynamic permeability, we derive a set of equations without convolution terms, in which memory effects are represented by auxiliary differential equations (ADEs). In the future, this new system will allow for efficient and less memory-demanding numerical approximation schemes to the Biot--Allard model.
		Further, we prove the well-posedness of the thus obtained Biot--Allard model by rewriting it as a first-order evolutionary problem, i.e., a system of the form
		\begin{align}\label{eq:abstractproblem}
			(\p_t M(\p_t)+A)\bm{U}=\bm{G},
		\end{align} 
  where $M$ is the respective material law, cf.\ Def.\ \ref{def:33}.
   This allows us to apply the abstract theory developed by R. Picard \cite{picard2009,seifert2022} to prove the well-posedness of system \eqref{eq:abstractproblem} considered on the whole time axis. Eq.\ \eqref{eq:abstractproblem} is equipped with initial values by a generalization of the solution theory to distribution right-hand sides \cite[Chapter 9]{seifert2022}, which is not explicitly done here for brevity.
 For an example of the evolutionary theory applied to a hyperbolic or parabolic problem with memory effects, we refer to \cite{trostorff2015}.
 The regularity, existence, and uniqueness of Biot models were first studied by Showalter \cite{showalter2000}.
	
	The paper is structured in the following way. In Sec.\ \ref{sec:ADE}, we introduce an auxiliary differential equation for the Biot--Allard model, which allows us to remove the convolution integral in \eqref{eqs:BiotAllardconv}. In Sec.\ \ref{sec:evo}, a brief introduction to the solution theory of R. Picard for evolutionary systems is presented, and the Biot--Allard model written as an evolutionary problem is shown to be well-posed. In Sec.\ \ref{sec: general}, we derive the general ADE formulation, which relies on a higher-order approximation of the dynamic permeability, compared with Sec.\ \ref{sec:ADE}. 
 The letter ends with a concluding section.
	
\section{Biot--Allard system with an auxiliary differential equation}\label{sec:ADE}

Computing a numerical solution of \eqref{eqs:BiotAllardconv} is expensive and challenging. Therefore, we derive an ADE to avoid the expensive evaluation of the convolution integral in \eqref{eqs:BiotAllardconv}. To do this, we use a known result \cite{yvonne2014} that for any pore geometry the dynamic permeability in the frequency domain is approximated by 
\begin{align}\label{relation:dynamicpermeability}
\hat{\calA}(\om) = \frac{\eta_k}{F}\sum_{j=1}^{N}\frac{d_j}{1+\i\om c_j},
\end{align}
where $\i$ is the imaginary unit, $\eta_k=\eta/\rho_f$ is the kinetic viscosity, $F=\a_{\infty}/\varphi$ is the formation factor, $\a_{\infty}$ is the inﬁnite-frequency tortuosity and $c_j, d_j>0$ are constants, where $c_j<\Phi$ with $\Phi$ being the principal viscous relaxation time. For a specific frequency range, chosen for a particular application, the constants $c_j, d_j$ can be estimated from $N$ sample points in a procedure detailed in \cite{yvonne2014}. 
For simplicity in the derivation, we consider the case when $N=1$; for the general case see Sec.\ \ref{sec: general}. We note that \eqref{relation:dynamicpermeability} implies an exponential decay in time of $\calA(t)$, typically found in visco-elastic problems with memory effects. We refer to \cite{grafakos2014} for a detailed introduction to Fourier analysis. For our purpose, we introduce an auxiliary variable $\hat{\Psi}_1$ in the frequency domain, defined by 
	\begin{align}\label{aux1}
	\hat{\Psi}_1 :=\frac{\eta_k}{F}\frac{d_1}{1+\i\om c_1}\left(\hat{\bmF}-\frac{1}{\rho_f}\del \hat{p}+\om^{2}\hat{\bmu}\right).
\end{align}
	Then, the convolution in \eqref{eq:biotallardflow} can be expressed in the frequency domain by
	\begin{align}
	\calA\ast \left(\bmF-\frac{1}{\rho_f}\del p-\p_t^{2}\bmu\right)=\hat{\calA}\cdot\left(\hat{\bmF}-\frac{1}{\rho_f}\del \hat{p}+\om^{2}\hat{\bmu}\right)=\hat{\Psi}_1.
	\end{align}
	We also have an ADE for the auxiliary variable $\hat{\Psi}_1$ in the frequency domain
	\begin{align}
		\hat{\Psi}_1+\i\om c_1\hat{\Psi}_1 -\frac{d_1\eta_k}{F}\left(\hat{\bmF}-\frac{1}{\rho_f}\del \hat{p}+\om^{2}\hat{\bmu}\right) = 0.
	\end{align}
		By computing the inverse Fourier transform and reordering the equation, we obtain the ADE
	\begin{align}\label{cor}
		c_1\p_t\Psi_1 +\Psi_1+\frac{d_1\eta_k}{F}\frac{1}{\rho_f}\del p+\frac{d_1\eta_k}{F}\p_t^{2}\bmu=\frac{d_1\eta_k}{F}\bmF.
	\end{align}
	This results in the coupled system of equations in the time domain 
	\begin{subequations}\label{eqs:biotallardADE}
		\begin{align}
				\rho\p_{t}^2\bmu-\div\bm{C}\e(\bmu)+\a\del p +\rho_f\p_t \Psi_1&=\rho\bmF,\\
			c_0\p_tp+\div (\alpha\p_t\bmu)+\div \Psi_1&=0,\\
				c_1\rho_f\p_t\Psi_1 +\rho_f\Psi_1+\frac{d_1\eta_k}{F}\del p+\frac{d_1\eta_k}{F}\rho_f\p_t^{2}\bmu&=\frac{d_1\eta_k}{F}\rho_f\bmF.
		\end{align}
	\end{subequations}
\begin{remark}
    The analysis of numerical approximations of poroelastic models has received much attention in the last decade. A big focus has been developing solvers based on sequentially solving the mechanics and flow subproblems. The most popular iterative methods are the fixed-stress split and the undrained split, 
    based on adding a stabilization term to either the flow or the mechanics equation \cite{kim2011}. The convergence of these schemes was analyzed in \cite{bause2017,both2017,mikelic2013,storvik2019}. A parallel fixed-stress scheme, which allows for solving the mechanics problems in parallel for all time steps, was proposed in \cite{borregales2018}. The iterative schemes for the quasi-static Biot have been extended and analyzed for a dynamic Biot model for soft poroelastic materials in \cite{both2022}. A splitting scheme for the dynamic Biot model is proposed in \cite{bause2019}, with the proof being based on gradient flow techniques \cite{both2019}. Furthermore, the fixed-stress split has been extended to a history-dependent Biot model \cite{stokke2024}. 
\end{remark}
	
	\section{Solution theory for evolutionary problems}\label{sec:evo}
	
	In this section, we give a brief presentation of the unified solution theory for evolutionary equations and analyze the problem \eqref{eqs:biotallardADE} within this setting. We use standard notation for Sobolev spaces. First, we introduce some preliminary definitions. Let $H$ be a real Hilbert space equipped with the norm $\|\cdot\|_{H}$, then for a given parameter $\nu\in\R$ we define the weighted Bochner space and its associated inner product 
	\begin{align}\label{eq:space}
		H_{\nu}(\R;H):=\left\{f:\R\to H:\,\int_{\R}\|f(t)\|_{H}^{2}e^{-2\nu t}dt\right\},\quad
		\left\langle f,g\right\rangle_{H_{\nu}}:= \int_{\R}\left\langle f,g \right\rangle_{H}e^{-2\nu t}dt,
	\end{align}
 for $f,g\in H_{\nu}(\R;H)$.
	The inner product \eqref{eq:space} induces a norm  $\|\cdot\|_{\nu}$.
	Let $C^{\infty}_{c}(\R;H)$ be the space of  infinitely differentiable $H$-valued functions with compact support, then we denote by $\p_t$ the closure of the operator 
	\begin{align*}
		\p_t:C^{\infty}_{c}(\R;H)\subset H_{\nu}(\R;H)\to H_{\nu}(\R;H),\,\,\phi\to\phi'.
	\end{align*}
	
	\begin{definition}
		Let $\nu\in\R$ and $\mathcal{F}$ be the Fourier transform. We define the Fourier-Laplace transform as
  \vspace{-0.1cm}
		\begin{align*}
			\mathcal{L}_{\nu}:H_{\nu}(\R;H)\to H_{\nu}(\R;H),\, f\mapsto \mathcal{F}e^{-\nu t}f.
		\end{align*}
	\end{definition}
	
	\begin{lemma}[{\cite[Proposition 2.6]{trostorff2015}}]
		For $\nu\in\R_{>0}$ there holds that $\p_t=\mathcal{L}_{\nu}^{*}(\nu+\i t)\mathcal{L}_{\nu}$.
	\end{lemma}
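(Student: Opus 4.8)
The statement to prove is the identity $\p_t = \mathcal{L}_\nu^*(\nu+\i t)\mathcal{L}_\nu$ on $H_\nu(\R;H)$ for $\nu > 0$. The strategy is to reduce everything to the classical Fourier transform, where differentiation becomes multiplication, and then carefully track the conjugation by the exponential weight $e^{-\nu t}$. First I would recall that $\mathcal{L}_\nu = \mathcal{F}\circ e^{-\nu t}$ (multiplication by $e^{-\nu t}$ followed by the Fourier transform) is, by construction, a unitary operator from $H_\nu(\R;H)$ onto $H_0(\R;H) = L^2(\R;H)$: the multiplication $f \mapsto e^{-\nu t}f$ is an isometry $H_\nu \to H_0$ by the very definition of the weighted norm in \eqref{eq:space}, and $\mathcal{F}$ is unitary on $L^2(\R;H)$ by Plancherel. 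Hence $\mathcal{L}_\nu^* = \mathcal{L}_\nu^{-1}$.

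The core computation is to conjugate $\p_t$ by $\mathcal{L}_\nu$. It suffices to verify the identity on the dense subspace $C^\infty_c(\R;H)$ and then pass to closures. For $\phi \in C^\infty_c(\R;H)$, write $g = e^{-\nu t}\phi$ and compute $e^{-\nu t}\p_t\phi = e^{-\nu t}\p_t(e^{\nu t}g) = \p_t g + \nu g$, i.e.\ $e^{-\nu t}\p_t = (\p_t + \nu)e^{-\nu t}$ as an operator identity. Applying $\mathcal{F}$ and using that the Fourier transform intertwines $\p_t$ with multiplication by $\i t$ (with the sign/normalization convention fixed by the paper's choice of $\mathcal{F}$, which must be the one making $\mathcal{F}\p_t\mathcal{F}^* = \i t$), we get $\mathcal{L}_\nu \p_t = \mathcal{F}(\p_t+\nu)e^{-\nu t} = (\i t + \nu)\mathcal{F}e^{-\nu t} = (\nu + \i t)\mathcal{L}_\nu$. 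Rearranging and using $\mathcal{L}_\nu^* = \mathcal{L}_\nu^{-1}$ yields $\p_t = \mathcal{L}_\nu^*(\nu+\i t)\mathcal{L}_\nu$ on the core, and since both sides are closed operators agreeing on a core, they coincide.

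The main obstacle is not any single calculation but the bookkeeping of domains and closures: one must confirm that the unitary $\mathcal{L}_\nu$ maps the domain of $\p_t$ (the graph-norm closure of $C^\infty_c$) exactly onto the domain of the multiplication operator $\nu+\i t$ (i.e.\ $\{h \in L^2(\R;H) : t\mapsto t\,h(t) \in L^2\}$), so that the operator identity holds on the full domain and not merely on a core. This is where positivity $\nu>0$ enters: it guarantees $\p_t$ is boundedly invertible on $H_\nu(\R;H)$ (the multiplier $\nu+\i t$ is bounded away from zero), so the closure is unambiguous and the spectral picture is clean. I would state this reduction explicitly, invoke Plancherel and the standard Fourier-differentiation rule as the two analytic inputs, and leave the density/closure argument as the routine—but essential—final step.
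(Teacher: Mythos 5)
Your proof is correct and follows the standard argument; note, though, that the paper itself does not prove this lemma but simply cites it as Proposition~2.6 of Trostorff~(2015), so there is no paper-internal proof to compare against. A small but useful observation you make along the way is that $\mathcal{L}_\nu$ is unitary from $H_\nu(\R;H)$ \emph{onto} $L^{2}(\R;H)$, which quietly corrects the paper's definition (where the codomain is typeset as $H_\nu(\R;H)$, evidently a slip, since the material law $M(\nu+\i m)$ is then declared to act on $L^{2}(\R;H)$). The one place where you could be sharper is the closing domain argument: rather than appealing to ``closures agreeing on a core,'' the cleanest route is to note that the multiplication operator $\nu+\i t$ is normal on $L^{2}(\R;H)$ with $\mathcal{F}(C^{\infty}_{c})$ (Paley--Wiener functions) as a core, so $\mathcal{L}_\nu^{*}(\nu+\i m)\mathcal{L}_\nu$ is normal and closed, and being a closed extension of $\p_t|_{C^{\infty}_{c}}$ that shares a core, it coincides with $\p_t$. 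The role of $\nu>0$ is primarily causality/invertibility (the resolvent bound $\|(\p_t)^{-1}\|\le 1/\nu$), not uniqueness of the closure, which holds for any $\nu\in\R$.
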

	\begin{definition}\label{def:33}
		Let $M:D(M)\subset\mathbb{C}\to L(H)$ be a bounded, analytic function. Then, we define the linear operator 
		\vspace*{-0.5cm}
		\begin{align*}
			M(\nu+\i m):L^{2}(\R;H)\to L^{2}(\R,H)
		\end{align*}
		by $(M(\nu+\i m)f)(t)=M(\nu+\i t)f(t)$ for $t\in\R$ and the linear material law $M(\p_t)$ by
		\begin{align*}
			M(\p_t):=\mathcal{L}_{\nu}^{*}M(\nu+\i m)\mathcal{L}_{\nu}\in L(H_{\nu}(\R;H)).
		\end{align*}
	\end{definition}

	\begin{theorem}[{\cite[Theorem 6.2.1]{seifert2022}} ]\label{thm}
		Let $\nu_0\in\R$ and $H$ be a Hilbert space. Let $M:D(M)\subset\mathbb{C}\to L(H)$ be a selfadjoint material law and let $A:D(A)\subset H\to H$ be skew-selfadjoint. Assume that 
		\begin{equation}\label{solutioncondition}
			{\rm Re}\langle\phi,zM(z)\phi\rangle\geq c\|\phi\|^{2}, \quad  \phi\in H,\quad z\in\mathbb{C}_{{\rm Re}\geq \nu_0}
		\end{equation}
		for some $c>0$. Then for every $\nu\geq\nu_0$ the operator $\p_t M(\p_t)+A$ is closable in $H_{\nu}(\R; H)$ and for $\bm{G}\in H_{\nu}(\R;H)$ there exists a unique solution $\bm{U}\in H_{\nu}(\R;H)$ such that 
		\begin{align}\label{closureop}
			\widebar{(\p_t M(\p_t)+A)}\bm{U}=\bm{G}.
		\end{align}
		In addition, $\bm{U}$ satisfies the stability estimate $\|\bm{U}\|_{\nu}\leq c^{-1}\|\bm{G}\|_{\nu}$.
	\end{theorem}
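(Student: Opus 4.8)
\emph{Proof (sketch).} The plan is to pass to the frequency domain via the unitary map $\mathcal{L}_{\nu}\colon H_{\nu}(\R;H)\to L^{2}(\R;H)$, where $\p_t$ is diagonalised. Assume without loss of generality $\nu>0$ (shifting $\nu_{0}$ if necessary), so that the Lemma applies and $\mathcal{L}_{\nu}\p_{t}\mathcal{L}_{\nu}^{*}$ is multiplication by the function $t\mapsto\nu+\i t$; combined with Definition~\ref{def:33} and the fact that $A$ acts only on the Hilbert-space fibre, and hence commutes with $\mathcal{L}_{\nu}$, the operator $\p_t M(\p_t)+A$ is unitarily equivalent to the operator on $L^{2}(\R;H)$ acting fibrewise by $f\mapsto\bigl(t\mapsto T_{\nu+\i t}f(t)\bigr)$, where for $z\in\mathbb{C}_{{\rm Re}\geq\nu_{0}}$ we put $T_{z}:=zM(z)+A$ with domain $D(A)$. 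It thus suffices to invert the fibre operators $T_{z}$ boundedly and uniformly along the line ${\rm Re}\,z=\nu$.

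The key observation is that each $T_{z}$ is strictly accretive. Since $A$ is skew-selfadjoint, ${\rm Re}\langle\phi,A\phi\rangle=0$ for all $\phi\in D(A)$, so hypothesis \eqref{solutioncondition} gives
\[
{\rm Re}\langle\phi,T_{z}\phi\rangle={\rm Re}\langle\phi,zM(z)\phi\rangle\geq c\|\phi\|^{2},\qquad\phi\in D(A),
\]
hence $\|T_{z}\phi\|\geq c\|\phi\|$. Because $T_{z}$ is a bounded perturbation of the closed operator $A$, it is closed, so this lower bound yields injectivity, closed range, and $\|T_{z}^{-1}\|_{L(H)}\leq c^{-1}$ on $R(T_{z})$. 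For surjectivity, one runs the same estimate for the adjoint: as $M$ is a selfadjoint material law, $M(z)^{*}=M(\bar z)$, so $T_{z}^{*}=\bar z M(\bar z)-A$, and since ${\rm Re}\,\bar z={\rm Re}\,z\geq\nu_{0}$ the hypothesis applied at $\bar z$ shows $T_{z}^{*}$ is strictly accretive with the same $c$. Therefore $\ker T_{z}^{*}=\{0\}$, and, $R(T_{z})$ being closed, $R(T_{z})=(\ker T_{z}^{*})^{\perp}=H$. Thus each $T_{z}$ is boundedly invertible with $\|T_{z}^{-1}\|\leq c^{-1}$, uniformly in $z$.

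It remains to reassemble the fibre inverses. Analyticity of $M$ makes $z\mapsto T_{z}^{-1}=(A+zM(z))^{-1}\in L(H)$ analytic, hence $t\mapsto T_{\nu+\i t}^{-1}$ is strongly continuous, so that $S\colon f\mapsto\bigl(t\mapsto T_{\nu+\i t}^{-1}f(t)\bigr)$ is a well-defined bounded operator on $L^{2}(\R;H)$ with $\|S\|\leq c^{-1}$. Transporting back, $\bm{U}:=\mathcal{L}_{\nu}^{*}S\mathcal{L}_{\nu}\bm{G}$ satisfies $\|\bm{U}\|_{\nu}\leq c^{-1}\|\bm{G}\|_{\nu}$ and solves \eqref{closureop}. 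Uniqueness follows from the $H_{\nu}$-accretivity estimate ${\rm Re}\langle u,(\p_t M(\p_t)+A)u\rangle_{\nu}\geq c\|u\|_{\nu}^{2}$, obtained from the fibrewise estimate and unitarity of $\mathcal{L}_{\nu}$, which forces $\|(\p_t M(\p_t)+A)u\|_{\nu}\geq c\|u\|_{\nu}$ and so persists under closure; closability of $\p_t M(\p_t)+A$ follows in turn from its adjoint being densely defined (it contains all compactly supported smooth $D(A)$-valued functions).

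The step I expect to cost the most is the last one: verifying that the solution operator $\mathcal{L}_{\nu}^{*}S\mathcal{L}_{\nu}$ actually inverts the \emph{closure} $\overline{\p_t M(\p_t)+A}$ rather than some proper extension of it, together with the domain bookkeeping for the operator-valued multiplier (in particular that adding the boundedly invertible bounded multiplier $zM(z)$ does not disturb the domain $D(A)$ of the unbounded part, and that $t\mapsto T_{\nu+\i t}^{-1}f(t)$ is measurable). These points are the technical heart of Picard's solution theory and are settled using the density of $C_{c}^{\infty}(\R;H)$ in the graph norm together with the uniform bound established above.
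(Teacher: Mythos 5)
The paper does not prove this theorem; it is cited verbatim from \cite[Theorem 6.2.1]{seifert2022}, so there is no in-paper proof to compare against. Your sketch is a correct reconstruction of the argument as it appears in that reference --- unitary Fourier--Laplace reduction, fibrewise accretivity of $T_z$, accretivity of $T_z^{*}$ giving surjectivity via the closed-range theorem, uniform bound, reassembly, and closability --- and the points you flag at the end (that the solution operator inverts the closure and not a proper extension, the domain bookkeeping, measurability of the multiplier) are exactly where the technical weight of the reference's proof lies. One small economy worth noting: the accretivity of $T_z^{*}$ does not actually require passing to $\bar z$ via the selfadjoint material law condition $M(z)^{*}=M(\bar z)$, since for any bounded operator $B$ one has ${\rm Re}\langle\phi,B^{*}\phi\rangle={\rm Re}\langle\phi,B\phi\rangle$; so the adjoint estimate follows immediately from \eqref{solutioncondition} at $z$ itself.
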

	This means that under the hypotheses of Theorem \ref{thm}, the problem \eqref{eq:abstractproblem} is well-posed, i.e. the
	uniqueness, existence and continuous dependence on the data $\bm{G}$ of a solution $\bm{U}$ are ensured. We note that when $\bm{G}$ is in the domain of the time derivative of $s$-order, denoted by $\p_t^{s}$, being the space $H^{s}_{\nu}(\R; H)$, then $\bm{U}\in H^{s}_{\nu}(\R;H)$ and the evolutionary problem is solved literally
	\begin{align}\label{literal}
		(\p_t M(\p_t)+A)\bm{U}=\bm{G}.
	\end{align} Before writing \eqref{eqs:biotallardADE} as an evolutionary problem of the form \eqref{eq:abstractproblem}, we define the spatial differential operators.

	\begin{definition}
		For an open empty set $\Om\subset\R^d$, where $d\in\N$, we define
		\begin{align*}
			L^{2}(\Om)_{\rm sym}^{d\times d}:= \left\{(\phi_{ij})_{i,j=1,2,...,d}\in L^{2}(\Om): \phi_{ij}=\phi_{ji},\,\forall i,j\in\{1,...,d\}\right\}.
		\end{align*}
	\end{definition}
	\begin{definition}
		For an open empty set $\Om\subset\R^d$, where $d\in\N$, we set
		\begin{align}
			\tgrad_0:H_{0}^{1}(\Om)\subset L^{2}(\Om)\to L^{2}(\Om)^{d}, \,\,\phi\to(\p_j\phi)_{j=1,...,d}
		\end{align}
		and
  \vspace{-0.5cm}
		\begin{align}
			\tGrad_0:H_{0}^{1}(\Om)^{d}\subset L^{2}(\Om)^{d}\to 	L^{2}(\Om)_{\rm sym}^{d\times d}, \,\,(\phi)_{j}\to\frac12(\p_i\phi_j+\p_j\phi_i)_{i,j=1,...,d}.
		\end{align}
		In addition, we define
		\begin{align}\label{skew1}
			\tdiv: D(\tdiv)\subset L^{2}(\Om)^{d}\to L^{2}(\Om), \,\,\tdiv:=-(\tgrad_0)^{*}
		\end{align}
		and
  \vspace{-0.5cm}
		\begin{align}\label{skew2}
			\tDiv: D(\tDiv)\subset L^{2}(\Om)^{d\times d}_{\rm sym}\to L^{2}(\Om)^{d}, \,\,\tDiv:=-(\tGrad_0)^{*}.
		\end{align}
	\end{definition}
	We note that for any $\bmu\in H_{0}^{1}(\Om)^{d}$ there holds that $\e(\bmu)=\tGrad_0\bmu$.
	To rewrite \eqref{eqs:biotallardADE} as a first-order evolutionary problem, we introduce new unknowns
	$
	\bmv:=\p_t\bmu$ and $\bm{\sigma}:=\textbf{C}\e$.
	This results in the system
	\begin{subequations}\label{eqs:firstorderADE}
		\begin{align}
			\rho\p_t\bmv -\mbox{Div}\bm{\sigma} +\a \mbox{grad}_0 p+\rho_f\p_t \Psi_{1}&=\rho\bmF,\\
			\bm{S}\p_t\bm{\sigma}- \tGrad_0 \bmv &=0,\\
				c_0\p_t p +\a \mbox{div} \bm{v}+\mbox{div}\Psi_1&=0,\\
			c_1\rho_f Fd_1^{-1}\eta_k^{-1}\p_t\Psi_1 +\rho_f Fd_1^{-1}\eta_k^{-1}\Psi_1+\tgrad_{0} p+\rho_f\p_t\bmv&=\rho_f\bmF,
		\end{align}
	\end{subequations}
		where $\bm{S}$ denotes the positive definite, fourth-order compliance tensor of the inverse stress-strain relation of Hook's law of linear elasticity $\e=\bm{S} \bm{\sigma}$.
	By denoting
	\begin{align}\label{data}
		\bm{U}:=(\bmv,\bms,p,\Psi_1)^{T}\mbox{ and } \bm{G} :=(\rho\bmF,0,0,\rho_f\bmF)^{T},
	\end{align}
	and defining the following operators
	\begin{align}\label{operators}
		M_0:=\begin{pmatrix}
			\rho & 0 & 0 & \rho_f\\
			0 & \bm{S} & 0 & 0\\
			0 & 0 & c_0 & 0\\
			\rho_f& 0& 0&\frac{c_1\rho_f F}{d_1\eta_k}
		\end{pmatrix},\,M_1:=\begin{pmatrix}
			0 & 0 & 0 & 0\\
			0 & 0 & 0 & 0\\
			0 & 0 & 0 & 0\\
			0 & 0 & 0 & \frac{\rho_f F}{d_1\eta_k}\\
		\end{pmatrix}, \,A:=\begin{pmatrix}
			0 & -\mbox{Div} & \a \mbox{grad}_0& 0\\
			-\mbox{Grad}_0 & 0 & 0 & 0\\
			\a\mbox{div} & 0 & 0 & \mbox{div}\\
			0 & 0 & \mbox{grad}_0 & 0\\
		\end{pmatrix},
	\end{align}
	the equations \eqref{eqs:firstorderADE} can be written as the following evolutionary problem.
	\begin{problem}[Evolutionary problem]\label{problem}
		Let $\bm{H}$ denote the product space
		\begin{equation}
			\bm{H}:=L^{2}(\Om)^{d}\times L^{2}(\Om)^{d\times d}_{\rm sym}\times L^{2}(\Om)\times L^{2}(\Om)^{d},
		\end{equation}
		endowed with the $L^{2}$ inner product of $L^{2}(\Om)^{(d+1)^{2}}$. Let $M_0,M_1:\bm{H}\to \bm{H}$ and $A:D(A)\subset \bm{H}\to \bm{H}$, with
		\begin{align}
			D(A): = H_0^{1}(\Om)^{d}\times D(\tDiv)\times H_{0}^{1}(\Om)\times D(\tdiv),
		\end{align}
		be defined by \eqref{operators}. For given $\bm{G}$ by \eqref{data}, find $U\in H_{\nu}(\R;\bm{H})$ such that 
		\begin{equation}\label{eq:22}
			(\p_tM_0+M_1+A)\bm{U}=\bm{G}.
		\end{equation} 
	\end{problem}

\begin{theorem}\label{thm:new}
 Consider \Cref{problem} with parameters of \eqref{relation:dynamicpermeability}. Let $\nu_0\in\R_{>0}$ and assume that the inequality $\left(\nu_0\left(\frac{c_1 F}{d_1\eta}-\frac{1}{\rho}\right)+\frac{ F}{d_1\eta}\right)>0$ holds. Then, the material law associated with \Cref{problem} satisfies \eqref{solutioncondition}, and hence \Cref{problem} is well-posed, and there exists a unique solution of \eqref{eq:22} in the sense of \eqref{closureop}.
		
	\end{theorem}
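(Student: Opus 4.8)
The plan is to cast \Cref{problem} in the abstract form $(\p_t M(\p_t)+A)\bm{U}=\bm{G}$ of \Cref{thm} and verify its hypotheses. Reading \eqref{eq:22} with the material law $M(z):=M_0+z^{-1}M_1$ gives $zM(z)=zM_0+M_1$ and $\p_t M(\p_t)=\p_t M_0+M_1$. The operators $M_0,M_1$ are bounded, constant-coefficient multiplication operators; $M_0=M_0^{*}$ since its two off-diagonal $\rho_f$ entries coincide and $\bm{S}=\bm{S}^{*}$, and $M_1=M_1^{*}\ge 0$ since it is a single nonnegative multiplication acting in the $\Psi_1$-slot. Hence $z\mapsto M(z)$ is analytic on $\{\mathrm{Re}\,z>0\}$, bounded on every half-plane $\mathbb{C}_{\mathrm{Re}\ge\nu_0}$ with $\nu_0>0$ (where the pole of $z^{-1}$ at the origin is harmless), and satisfies $M(z)^{*}=M(\overline{z})$, so $M$ is a selfadjoint material law in the sense of \Cref{thm}.

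Skew-selfadjointness of $A$ follows directly from the definitions: by \eqref{skew1}--\eqref{skew2}, the operators $\tdiv=-(\tgrad_0)^{*}$ and $\tDiv=-(\tGrad_0)^{*}$ are (minus) the adjoints of the densely defined closed operators $\tgrad_0$ and $\tGrad_0$, and the block pattern of $A$ in \eqref{operators} places the pairs $(\tgrad_0,\tdiv)$ and $(\tGrad_0,\tDiv)$ in a skew-symmetric arrangement (the scalar Biot coefficient $\alpha$ enters only as a prefactor and does not affect the argument); by the standard argument for such block operators, $A^{*}=-A$ on $D(A)$.

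The heart of the proof is the positivity estimate \eqref{solutioncondition}. For $z=\nu+\mathrm{i}\mu$ with $\nu=\mathrm{Re}\,z\ge\nu_0>0$, selfadjointness of $M_0$ and $M_1$ gives $\mathrm{Re}\langle\phi,zM(z)\phi\rangle=\langle\phi,(\nu M_0+M_1)\phi\rangle$. Since the only coupling inside $M_0$ and $M_1$ is between the $\bmv$- and $\Psi_1$-components, $\nu M_0+M_1$ is block-diagonal: it acts as $\nu\bm{S}$ on the $\bms$-slot (coercive with constant $\ge\nu_0 s_0>0$, where $s_0$ is the coercivity constant of $\bm{S}$), as $\nu c_0\ge\nu_0 c_0>0$ on the $p$-slot, and componentwise as the symmetric matrix
\[
B_\nu=\begin{pmatrix}\nu\rho & \nu\rho_f\\[3pt] \nu\rho_f & \nu\frac{c_1\rho_f F}{d_1\eta_k}+\frac{\rho_f F}{d_1\eta_k}\end{pmatrix}
\]
on the $(\bmv,\Psi_1)$-slot. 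By Sylvester's criterion $B_\nu$ is positive definite iff $\nu\rho>0$ and $\det B_\nu>0$, and a direct computation using $\eta_k=\eta/\rho_f$ gives
\[
\det B_\nu=\nu\rho\rho_f^{2}\Big(\nu\big(\tfrac{c_1 F}{d_1\eta}-\tfrac1\rho\big)+\tfrac{F}{d_1\eta}\Big),
\]
whose bracket is the affine-in-$\nu$ quantity assumed positive at $\nu_0$ in the hypothesis. Combining $\lambda_{\min}(B_\nu)\ge\det B_\nu/\mathrm{tr}\,B_\nu$ with the observation that $\nu\mapsto\det B_\nu/\mathrm{tr}\,B_\nu$ is continuous, positive on $[\nu_0,\infty)$ and has a positive limit (possibly $+\infty$) as $\nu\to\infty$ yields a uniform bound $\lambda_{\min}(B_\nu)\ge c_{*}>0$ for all $\nu\ge\nu_0$. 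Then \eqref{solutioncondition} holds with $c:=\min\{\nu_0 s_0,\nu_0 c_0,c_{*}\}$, and \Cref{thm} delivers closability of $\p_t M_0+M_1+A$, a unique $\bm{U}\in H_\nu(\R;\bm{H})$ with $\overline{(\p_t M_0+M_1+A)}\bm{U}=\bm{G}$, and the estimate $\|\bm{U}\|_\nu\le c^{-1}\|\bm{G}\|_\nu$.

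I expect the delicate point to be making the lower bound on $B_\nu$ uniform over the whole half-plane $\mathrm{Re}\,z\ge\nu_0$ rather than only at $z=\nu_0$: because the bracket in $\det B_\nu$ is affine in $\nu$, positivity at $\nu_0$ propagates to all $\nu\ge\nu_0$ precisely when the slope $\tfrac{c_1 F}{d_1\eta}-\tfrac1\rho$ is nonnegative --- which is the regime in which the $(\bmv,\Psi_1)$-block of $M_0$ is already positive semidefinite and $M_1$ only has to restore definiteness on its (at most one-dimensional) kernel. This is where the structural assumption on the parameters genuinely enters, and the place to argue with care; the remaining ingredients --- analyticity and boundedness of $M$, skew-selfadjointness of $A$, and the block-diagonalization of $\nu M_0+M_1$ --- are routine once the operators have been identified.
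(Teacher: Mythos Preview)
Your approach matches the paper's: cast the system as $(\p_t M(\p_t)+A)\bm U=\bm G$ with $M(z)=M_0+z^{-1}M_1$, check that $A$ is skew-selfadjoint via \eqref{skew1}--\eqref{skew2}, and verify the coercivity \eqref{solutioncondition} by analysing the coupled $(\bmv,\Psi_1)$-block of $\nu M_0+M_1$. The only technical difference is how that $2\times 2$ block is treated. The paper applies a symmetric Gauss step (congruence) to $M_0$, replacing the $(4,4)$ entry by the Schur complement $\frac{c_1\rho_f F}{d_1\eta_k}-\frac{\rho_f^{2}}{\rho}$ and reading off the explicit constant $c_{\min}=\min\{\nu_0\rho,\,\nu_0 c_s,\,\nu_0 c_0,\,\rho_f^{2}(\nu_0(\tfrac{c_1F}{d_1\eta}-\tfrac{1}{\rho})+\tfrac{F}{d_1\eta})\}$ directly. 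Your route via Sylvester and $\lambda_{\min}(B_\nu)\ge\det B_\nu/\operatorname{tr} B_\nu$ is equivalent in content---the determinant you compute is exactly $\nu\rho$ times the paper's Schur-complement bracket---but produces a non-explicit $c_{*}$ and an extra asymptotic step. The congruence approach is a little cleaner here.

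Your final paragraph correctly isolates the one genuine subtlety: making the bound uniform over $\nu\ge\nu_0$. Both your limit argument and the paper's plain replacement of $\nu$ by $\nu_0$ in the last term require the slope $\tfrac{c_1F}{d_1\eta}-\tfrac{1}{\rho}$ to be nonnegative, so that the affine bracket is nondecreasing in $\nu$; without this your claim that $\det B_\nu/\operatorname{tr} B_\nu$ stays positive and has a positive limit fails. The paper does not spell this out in the proof but effectively assumes it, pointing in the following remark to physical parameter ranges ($F\ge 1$, $\rho>1$, $\eta<1$, $c_1>d_1$) under which the slope is indeed positive. So your hedging is well placed, and otherwise the argument is complete.
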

 
		
	\begin{proof}
	  The operators $M_0,M_1$ in \eqref{operators} correspond to a material law in the form
	\begin{align}\label{eq:26}
		M(z)=\begin{pmatrix}
			\rho & 0 & 0 & \rho_f\\
			0 & \bm{S} & 0 & 0\\
			0 & 0 & c_0 & 0\\
			\rho_f& 0& 0&\frac{c_1\rho_f F}{d_1\eta_k}
		\end{pmatrix}+\frac{1}{z}\begin{pmatrix}
			0 & 0 & 0 & 0\\
			0 & 0 & 0 & 0\\
			0 & 0 & 0 & 0\\
			0 & 0 & 0 & \frac{\rho_f F}{d_1\eta_k}\\
		\end{pmatrix}.
	\end{align}
	By using symmetric Gauss steps as congruence transformation, the material law operator is congruent to 
	\begin{align}
	\begin{pmatrix}
		\rho & 0 & 0 & 0\\
		0 & \bm{S} & 0 & 0\\
		0 & 0 & c_0 & 0\\
		0& 0& 0&\frac{c_1\rho_f F}{d_1\eta_k}-\frac{\rho_f^{2}}{\rho}
	\end{pmatrix}+\frac{1}{z}\begin{pmatrix}
		0 & 0 & 0 & 0\\
		0 & 0 & 0 & 0\\
		0 & 0 & 0 & 0\\
		0 & 0 & 0 & \frac{\rho_f F}{d_1\eta_k}\\
	\end{pmatrix}.
	\end{align}
	Let $z=\nu+\i m\in\mathbb{C}_{\rm Re\geq\nu_0}$ and $\phi=(\bmv,\bm{\sigma},p,\Psi_1)\in \bm{H}$. The compliance tensor $\bm{S}$ is symmetric and uniformly positive definite in the sense that ${\rm Re} \bm{S}\geq c_s>0$. Then by the positivity of the constants it follows that
	\begin{align}
		{\rm Re}\langle \phi,zM(z)\phi\rangle_{\scriptscriptstyle H}&=\nu\langle \rho \bmv,\bmv\rangle_{ \scriptscriptstyle L^{2}(\Om)^{d}}+{\rm Re}\langle \bms, zS\bms\rangle_{\scriptscriptstyle L^{2}(\Om)^{d\times d}_{\rm sym}}+\nu\langle c_0p,p\rangle_{\scriptscriptstyle L^{2}(\Om)}\nonumber\\&\,\,+\left(\nu\left(\frac{c_1\rho_f F}{d_1\eta_k}-\frac{\rho_f^{2}}{\rho}\right)+\frac{\rho_f F}{d_1\eta_k}\right)\left\langle \Psi_1,\Psi_1\right\rangle_{\scriptscriptstyle L^{2}(\Om)^{d}}\nonumber\\
		&\geq \nu \rho\| \bmv\|^{2}_{ \scriptscriptstyle L^{2}(\Om)^{d}}+\nu c_s\|\bms\|^{2}_{ \scriptscriptstyle L^{2}(\Om)^{d\times d}_{\rm sym}}+\nu c_0\| p\|^{2}_{ \scriptscriptstyle L^{2}(\Om)}+\left(\nu\left(\frac{c_1\rho_f F}{d_1\eta_k}-\frac{\rho_f^{2}}{\rho}\right)+\frac{\rho_f F}{d_1\eta_k}\right)\| \Psi_1\|^{2}_{ \scriptscriptstyle L^{2}(\Om)^{d}}\nonumber\\
		&\geq \min\left\{\nu_0 \rho,\nu_0 c_s,\nu_0 c_0,\rho_f^{2}\left(\nu_0\left(\frac{c_1 F}{d_1\eta}-\frac{1}{\rho}\right)+\frac{F}{d_1\eta}\right)\right\}\|\phi\|_{\scriptscriptstyle H}^{2}= c_{\min}\|\phi\|_{\scriptscriptstyle H}^{2}.\label{eq:25}
	\end{align}
	By the assumption of the theorem, the positivity of $c_{\min}$ is ensured. Therefore, by \eqref{eq:25} the material law \eqref{eq:26} satisfies \eqref{solutioncondition}. Also, from \eqref{skew1} and \eqref{skew2} it follows that $A$ is skew-selfadjoint and then by Theorem~\ref{thm} \Cref{problem} is well-posed.
	\end{proof}
\begin{remark}\label{remark: assumption}
    The inequality assumed in Theorem \ref{thm:new} holds for a large class of applications. First, we note that $F\geq 1$ and that it becomes large for materials with low porosity. Secondly, in many cases, we have that $\rho>1$ and $\eta<1$. Experiments for materials with higher porosity, like cancellous bone, indicate that $c_1>d_1$ \cite{yvonne2014}. By considering the material values for an epoxy-glass mixture and sandstone saturated with water from \cite{yvonne2019}, one also sees that $c_1>d_1$. The computation of the constants for $N=4$ and different material parameters can be redone at \url{https://github.com/MrShuffle/DynamicPermeabilityConstants/releases/tag/v1.2}
    \end{remark} 
	\begin{remark}
		The existence and uniqueness of a solution to the system \eqref{eqs:BiotAllardconv} in the convolution form can also be shown using \Cref{thm}. In order to show the solvability condition \eqref{solutioncondition} for the corresponding material law we need to assume that $\calA\in L_{\nu}^{1}(\R)$ which implies that the operator $\calA\ast$ is linear and bounded \cite[Lemma 3.1]{trostorff2015}. In addition, for $\nu_0\in\R_{>0}$ the assumption that ${\rm Re}(\sqrt{2\pi}\hat{\calA}(-\i(\i m+\nu_0)))^{-1}-\nu_0\frac{\rho_f}{\rho}>0$  holds is necessary to prove the inequality \eqref{solutioncondition}. This assumption is satisfied for a large class of problems if $\nu_0$ is chosen sufficiently large. 
	\end{remark}

	\section{General approximation of the dynamic permeability with $N>1$ in Eq.\ \eqref{relation:dynamicpermeability}}\label{sec: general}
	In the general case, when $N>1$ in the approximation of the dynamic permeability \eqref{relation:dynamicpermeability} in the frequency domain, we get an extended Biot--Allard system by introducing the auxiliary variables $\hat{\Psi}_j$, defined by
	\begin{align}\label{eq:auxj}
		\hat{\Psi}_j := \frac{\eta_k}{F}\frac{d_j}{1+\i\om c_j}\left(\hat{\bmF}-\frac{1}{\rho_f}\del \hat{p}+\om^{2}\hat{\bmu}\right), \quad\forall j=1,...,N.
	\end{align}
	Then, by taking the Fourier inverse of \eqref{eq:auxj} we obtain the augmented Biot--Allard system
	\begin{subequations}\label{eqs:biotallardADEgeneral}
		\begin{align}
			\rho\p_{t}^2\bmu-\div\bm{C}\e(\bmu)+\a\del p +\rho_f\sum_{j=1}^{N}\p_t \Psi_j&=\rho\bmF,\\
			c_0\p_tp+\div (\alpha\p_t\bmu)+\sum_{j=1}^{N}\div \Psi_j&=0,\\
			\frac{c_j\rho_f F}{d_j\eta_k}\p_t\Psi_j+\frac{\rho_f F}{d_j\eta_k}\Psi_j +\del p+\rho_f\p_t^{2}\bmu&=\rho_f\bmF, \quad\forall j=1,...,N.
		\end{align}
	\end{subequations}
	The well-posedness of \eqref{eqs:biotallardADEgeneral} written as an evolutionary problem can be obtained similarly to the case of \Cref{thm} when $N=1$, since the skew-adjointness of the operator $A$ is preserved. Thus a unique solution to \eqref{eqs:biotallardADEgeneral} rewritten as an evolutionary problem exists under the assumption that the inequality
 \begin{align}\label{ass:general}
 \left(\nu_0\left(\frac{c_j F}{d_j\eta}-\frac{1}{\rho}\right)+\frac{F}{d_j\eta}\right)>0, \quad\forall j=1,...,N,
 \end{align}
 hold for some $\nu_0\in\R_{>0}$.
 For the constants $c_j,d_j$ in \eqref{ass:general}, \Cref{remark: assumption} continues to hold. 

\section{Conclusions}
  We considered the Biot--Allard model for flow in deformable porous media with memory effects. We derived
	a new equivalent formulation where the memory effects are incorporated by auxiliary differential equations. We proved its well-posedness by an abstract solution theory for evolutionary problems. 
  The advantage of writing the Biot--Allard model in the convolution-free form \eqref{eqs:biotallardADEgeneral} becomes evident when we turn towards its numerical discretization. The expensive computation of time convolution integrals is avoided. This allows for fast and efficient numerical schemes. For the feasibility and efficiency of the ADE approach \eqref{aux1}-\eqref{cor} in non-linear optics, we refer to \cite{margenberg2024}. The approximation of the dynamic Biot \eqref{eqs:BiotAllardconv} without memory terms by families of space-time finite element methods, based on a uniform variational formulation in space and time, is proposed and investigated in \cite{bause2024}. Structure-preserving space-time finite element methods for the dynamic Biot system are considered in \cite{kraus2024}. 

 

	\section*{Acknowledgments}
	JSS and FAR acknowledge the support of the VISTA program, The Norwegian
	Academy of Science and Letters, and Equinor. We wish to thank M. Jakobsen for helpful discussions during this work.

	

	


\end{document}